\tikzset{emp/.style={double distance = 0.3ex}}
\tikzset{M edge/.style={line width=1.3pt,double distance=1.1pt}}
\tikzset{F1 edge/.style={line width=1.3,color=red,->}}
\tikzset{F2 edge/.style={line width=1.3,color=blue,->}}
\tikzset{E edge/.style={line width=1.3,color=black,-}}
\tikzset{squared black vertex/.style={draw,minimum size=2mm,inner sep=0pt,outer sep=3pt,fill=black, color=black}}
\tikzset{red vertex/.style={circle,draw,minimum size=2mm,inner sep=0pt,outer sep=2pt,fill=red, color=red}}
\tikzset{blue vertex/.style={circle,draw,minimum size=2mm,inner sep=0pt,outer sep=2pt,fill=blue, color=blue}}
\tikzset{black vertex/.style={circle,draw,minimum size=2mm,inner sep=0pt,outer sep=2pt,fill=black, color=black}}
\tikzset{small black vertex/.style={circle,draw,minimum size=1.2mm,inner sep=0pt,outer sep=1.2pt,fill=black, color=black}}
\tikzset{small white vertex/.style={circle,draw,minimum size=1.2mm,inner sep=0pt,outer sep=1.2pt,color=black,fill=white}}
\tikzset{square vertex/.style={draw,minimum size=1.2mm,inner sep=0pt,outer sep=1.2pt,fill=black, color=red}}
\tikzset{white vertex/.style={circle,draw,minimum size=2mm,inner sep=0pt,outer sep=3pt,color=black,fill=white}}
\tikzset{fatpath/.style={line width=9pt,rounded corners=.1mm}}
\tikzstyle{edge}=[line width=1.3]
\tikzstyle{color1}=[color=blue] 
\tikzstyle{color2}=[color=red]
\tikzstyle{color3}=[color=green] 
\tikzstyle{color4}=[fill=yellow]
\tikzstyle{color5}=[ dashed] 
\tikzstyle{backcolor1}=[color=gray!55!white] 
\tikzstyle{backcolor2}=[color=blue!35!white] 
\DeclarePairedDelimiter\ceil{\lceil}{\rceil}
\newcommand{\etal}{\textit{et~al.}}
\newtheorem{theorem}             {Theorem}
\newtheorem{conjecture}	[theorem] {Conjecture}
\newtheorem{proposition}[theorem] {Proposition}   
\newtheorem{corollary}	[theorem] {Corollary}
\newtheorem{claim}	[theorem] {Claim}
\title{On two conjectures about the intersection of longest paths and cycles.} 
\thanks{
  J. Gutiérrez was partially supported by
by Movilizaciones para Investigación AmSud, PLANarity and distance IN Graph theory (E070-2021-01-Nro.6997) and Fondo Semilla UTEC 871075-2022.  
  }
\author{Juan Gutiérrez} 
\address{\vspace{-4mm}Departamento de Ciencia de la Computación\\ 
Universidad de Ingeniería y Tecnología (UTEC), Perú}
\email{jgutierreza@utec.edu.pe}
\author{Christian Valqui}
\address{Pontificia Universidad Cat\'olica del Per\'u, Secci\'on Matem\'aticas, PUCP, Av. Universitaria 1801, San Miguel, Lima 32, Per\'u}
\email{cvalqui@pucp.edu.pe}
\begin{document}
\normalem

\maketitle

\begin{abstract}
A conjecture attributed to Smith states that every pair of longest cycles in a $k$-connected graph intersect each other in at least $k$ vertices.
In this paper, we show that every pair of longest cycles in a~$k$-connected graph on $n$ vertices intersect each other in at least~$\min\{n,8k-n-16\}$ vertices,
which confirms Smith's conjecture when $k\geq  (n+16)/7$. 
An analog conjecture for paths instead of cycles was stated by Hippchen. 
By a simple reduction, we relate both conjectures, showing that Hippchen's conjecture is valid
when either $k \leq 6$ or $k \geq (n+9)/7$.
\end{abstract}

\section{Introduction and Preliminaries}

It is known that every pair of longest cycles (paths)
in a 2-connected (connected) graph intersect each other in at least two vertices (one vertex).
A conjecture proposed by Gr\"otschel, and attributed to Scott Smith \cite[Conjecture 5.2]{ Grotschel84}, states that, in a $k$-connected graph, with $k\geq 2$, every pair of longest cycles intersect each other in at least $k$ vertices.
Years later, Hippchen \cite[Conjecture 2.2.4]{Hippchen08} conjectured that, for~$k$-connected
graphs, every pair of longest paths intersect each other in at least~$k$ vertices.

Smith's conjecture has been verified up to~$k=7$
\cite{Grotschel84}, and, for a general~$k$, it was proved that every pair of longest cycles intersect in at least~$ck^{3/5}$ vertices, for a constant~$c \thickapprox 0.2615$
\cite{Chen98}.
For Hippchen's conjecture, the case~$k=3$ was proved by Hippchen himself
\cite[Lemma 2.2.3]{Hippchen08}.
Later, the first author show it for $k=4$,
and recently, Cho \etal \cite{Cho2022} show it for $k=5$.

As we can observe in the literature, efforts on both conjectures has been made independently. The first main contribution of this paper is relate these conjectures. We will show that if
Smith's conjecture is valid for a fixed $k$, then
Hippchen's conjecture is valid for $k-1$.
This relationship implies that efforts 
can be concentrated on Smith's conjecture.
So, as Smith's conjecture is known to be valid for $k\leq 7$, an easy corollary is that Hippchen's conjecture is true for~$k\leq 6$.
(For an independent proof of this fact, see
\cite{Gutierrez2021-bitraceable}).

The second main contribution of this paper is
an improvement of a result on Hippchen's conjecture when $k=\Omega(n)$, where $n$ is the number of vertices of the graph.
In \cite{Gutierrez2021-Pro}, the first author
showed that Hippchen's conjecture is valid when
$k \geq (n-2)/3$. This result was improved
by Cho \etal \cite{Cho2022}. They showed
that Hippchen's conjecture is valid when
$k \geq (n+2)/5$.
In this paper, we improve this result, by showing
that Hippchen's conjecture is valid when
$k \geq (n+9)/7$.
In fact, we show the stronger statement
that every pair of longest paths in a $k$-connected graph intersect in at least $\min\{k,8k-n-9\}$ vertices.
Moreover, this result is a consequence of an analog result on cycles: we show that every pair of longest cycles intersect in at least $\min\{k,8k-n-16\}$ vertices.

In this paper all graphs are simple (without loops or parallel edges) and the notation and terminology are standard.
We also consider simple paths and cycles, that is, repetitions of vertices or edges is not allowed.
The \textbf{length} of a path~$P$ (cycle $C$) is the number of edges it has, and it is denoted by~$|P|$ ($|C|$). A \textbf{longest path} (cycle) in a graph is a path (cycle) with maximum length among all paths (cycles).
Given a path~$P$ and two vertices~$x$ and~$y$ 
in~$P$, we denote by~$P[x,y]$ the subpath of~$P$ with extremes~$x$ and~$y$.

Given two set of vertices~$S$ and~$T$ in a graph~$G$, an~$\bm{S}$-$\bm{T}$ \textbf{path} is a path with one end in~$S$, the other end in~$T$, and whose internal vertices are neither in~$S$ nor~$T$. If~$S=\{v\}$, we also say that an~$S$-$T$ path is a~$v$-$T$ path. When we refer to the intersection of two paths or cycles in a graph, we mean vertex-intersection, that is, the set of vertices they share.
Two paths are \textbf{internally disjoint} if
they have no internal vertices in common.

A graph~$G$ is~$\bm{k}$\textbf{-connected} if, for any two distinct 
vertices~$u$ and~$v$ in~$G$, there exists a set of
$k$~$u$-$v$ internally disjoint paths.
It is easy to see that for a~$k$-connected graph on~$n$ vertices, we have~$k\leq n-1$.
A set $S \subseteq V(G)$ of a connected graph $G$ is a \textbf{separator} if $G-S$ has more than one component.
It is known that a graph is $k$-connected if and only if
every separator in the graph has size at least $k$.

\section{Two families of conjectures on longest paths and cycles}

For every $k\geq 1$ and every $r \leq k$, we consider the following families of conjectures.

\begin{conjecture}[Conjecture P$\mathbf{(k,r)}$]\label{conj:Pkr}
In any $k$-connected graph, every pair of longest paths intersect in at least~$r$ vertices.
\end{conjecture}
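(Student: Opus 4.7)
The plan is to attack P$(k,r)$ through a reduction to the analogous conjecture for cycles, then prove the cycle conjecture directly when $k$ is large. Given a $k$-connected graph $G$ on $n$ vertices with two longest paths $P_1,P_2$, I would form the \emph{apex extension} $G^+$ obtained by adding a single new vertex $w$ adjacent to every vertex of $V(G)$. A quick case analysis on separators shows that $G^+$ is $(k+1)$-connected: a separator of $G^+$ omitting $w$ cannot disconnect anything, since $w$ is adjacent to every remaining vertex; and a separator containing $w$ restricts to a separator of $G$, hence has size at least $k+1$.

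The second observation is that a longest cycle of $G$ has length at most one more than a longest path of $G$, so the longest cycles of $G^+$ are exactly the cycles of the form $P+w$ with $P$ a longest path of $G$. In particular $C_i := P_i + w$ is a longest cycle of $G^+$ and $|V(C_1)\cap V(C_2)| = |V(P_1)\cap V(P_2)|+1$. Hence any bound of the form ``two longest cycles in an $(k+1)$-connected graph share at least $s$ vertices'' gives P$(k,s-1)$ at once. Smith's conjecture for connectivity $k+1$ would therefore settle P$(k,k)$ (Hippchen's conjecture at connectivity $k$), explaining the one-step offset announced in the introduction.

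For the cycle estimate itself, I would argue by contradiction: suppose two longest cycles $C_1,C_2$ in a $k$-connected graph $H$ share fewer than the target number of vertices. Appealing to Menger's theorem between $V(C_1)\setminus V(C_2)$ and $V(C_2)\setminus V(C_1)$, one extracts a family of internally disjoint bridges through $V(H)\setminus V(C_1\cap C_2)$. Any single bridge, together with appropriately chosen arcs of $C_1$ and $C_2$, produces a closed walk that can be redirected into a strictly longer cycle unless the bridge is absorbed by the intersection. Counting the vertices the bridges consume outside $V(C_1\cap C_2)$ produces the linear inequality between $n$, $k$ and $|V(C_1)\cap V(C_2)|$ that gives the bound $\min\{n,8k-n-16\}$.

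The main obstacle, as usual in longest-cycle arguments, is the combinatorics of the splicing step: one must control the cyclic order of bridge endpoints along $C_1$ and along $C_2$, distinguishing ``crossing'' from ``nesting'' configurations, and ensure that the length gained by inserting a bridge is not forfeited when re-closing the cycle through a detour on $C_1$ or $C_2$. The reduction step is essentially bookkeeping, and the large-$k$ regime for P$(k,r)$ then follows from the cycle estimate by substituting $k\mapsto k+1$ and $n\mapsto n+1$; what remains open, and genuinely hard, is bridging the gap between this regime and the low-connectivity cases already settled by reduction to Smith.
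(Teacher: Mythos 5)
Your reduction step is correct and is essentially the paper's own argument (Theorem \ref{thm:CimpliesP}): add a universal apex vertex, verify $(k+1)$-connectivity by the same separator case analysis, and observe that the longest cycles of the extended graph are exactly the longest paths of $G$ closed up through the apex, so that an intersection bound of $r+1$ for cycles yields $r$ for paths. You are in fact slightly more explicit than the paper in checking that $P_i + w$ really is a longest cycle of $G^{+}$, a point the paper leaves implicit. Combined with the known cases of Smith's conjecture up to connectivity $7$, this already gives P$(k,k)$ for $k \leq 6$, exactly as in the paper.

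The gap is in the cycle estimate, and you acknowledge it yourself. What you sketch --- Menger's theorem between $V(C_1)\setminus V(C_2)$ and $V(C_2)\setminus V(C_1)$, followed by a splicing argument on bridges --- is not the paper's argument and, as written, contains no mechanism that produces the specific bound $8k-n-16$. The paper's proof of Theorem \ref{thm:Ck8k-n-16} instead (i) chooses a longest path $R$ of length at most two inside a component of $G-C$ or $G-D$, maximizing its edge overlap with the other cycle; (ii) applies the Fan lemma at both ends of $R$ to obtain two fans of $k-|R|$ internally disjoint paths into $C$; (iii) builds a weighted auxiliary cycle $C^{*}$ on the fan endpoints and lower-bounds each arc by $2$, $2+|R|$ or $3$ according to a bicoloring of its endpoints, using longest-cycle maximality to exclude shorter arcs; and (iv) closes the count with Dirac's bound $L \geq \min\{2k,n\}$ and the inequality $L \leq 3|X|$, which follows because every component of $D-C$ has at most two vertices once $|R| \leq 1$. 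None of these ingredients --- the short path $R$, the two-ended fan construction, the bicolored-edge accounting, or the $L \leq 3|X|$ step --- appears in your proposal, so the quantitative heart of the result is missing. Moreover, since the target statement is a conjecture that the paper itself establishes only for $k\leq 6$ and for $r \leq \min\{k, 8k-n-9\}$, no argument along these lines can be expected to settle P$(k,r)$ in full generality; your proposal, to its credit, is honest about exactly where the unproved core lies.
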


\begin{conjecture}[Conjecture C$\mathbf{(k,r)}$]\label{conj:Ckr}
In any $k$-connected graph, every pair of longest cycles intersect in at least~$r$ vertices.
\end{conjecture}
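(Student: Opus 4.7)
The plan is to prove the instance of Conjecture~C$(k,r)$ announced in the abstract: that every pair of longest cycles in a $k$-connected graph $G$ on $n$ vertices meets in at least $r = \min\{k,\, 8k - n - 16\}$ vertices. I would proceed by contradiction, assuming $C_1,C_2$ are longest cycles with $s := |V(C_1)\cap V(C_2)| < r$. Then $s < k$, so $I := V(C_1)\cap V(C_2)$ is not a separator of $G$, and Menger's theorem provides $k$ internally disjoint paths between any prescribed pair of vertices, many of which survive even after deleting $I$.

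First, I would study the arc structure induced by $I$ on each cycle. The set $I$ partitions $C_1$ into arcs $A_1,\ldots,A_s$ and $C_2$ into arcs $B_1,\ldots,B_s$, where each arc is a maximal subpath whose interior avoids $I$. The maximality of $C_1$ and $C_2$ translates into rerouting inequalities: for any two arcs $P,Q$ sharing a pair of endpoints in $I$, swapping one for the other may not produce a longer cycle, which constrains a variety of length comparisons among the $A_i$ and $B_j$. I would then use $k$-connectivity to connect interior vertices of arcs of $C_1$ to interior vertices of arcs of $C_2$: each such Menger-style connection either enables a rerouting that would make $C_1$ or $C_2$ strictly longer (a contradiction) or certifies that many vertices off $I$ are distinct.

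The bound $8k - n - 16$ should then drop out of a double count. Writing $n_i := |V(C_i)\setminus I|$, one has $n \geq s + n_1 + n_2$, so $n_1 + n_2 \leq n - s$. The structural step should force a lower bound of the form $n_1 + n_2 \geq 8k - 2s - 16$, obtained by selecting a bounded number of ``extremal'' arcs on each cycle (four on each, corresponding to the factor $8$) and using $k$ internally disjoint paths to certify that each such arc contributes at least $k$ distinct vertices off $I$, modulo an $O(1)$ overhead at endpoints (the constant $16$). Combining the two inequalities yields $s \geq 8k - n - 16$, contradicting the assumed upper bound on $s$.

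The main obstacle is the case analysis behind the lower bound on $n_1 + n_2$. One must rule out degenerate configurations in which the $k$ guaranteed paths bunch inside a single arc or loop back through $I$, and in which arcs of length zero or one hide contributions. Extending the clean pairwise arc-exchange argument to a simultaneous argument over four arcs per cycle, while never overcounting a vertex and losing no more than the additive constant $-16$, is where I expect the technical heavy lifting to lie.
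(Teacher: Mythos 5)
Your proposal is a plan rather than a proof, and the one place where it commits to a mechanism is the place where it breaks. You partition each cycle into arcs using the intersection set $I$ and claim the factor $8$ will come from ``four extremal arcs on each cycle, each contributing at least $k$ distinct vertices off $I$.'' There is no reason an arc of $C_1$ delimited by two consecutive vertices of $I$ should contain anywhere near $k$ vertices, and $k$-connectivity gives you no handle on individual arcs of this decomposition: Menger/fan paths attach to the whole cycle, not to a prescribed arc, so the guaranteed paths can indeed ``bunch,'' exactly the degenerate case you flag but do not resolve. The target inequality $n_1+n_2\geq 8k-2s-16$ is equivalent to showing each longest cycle has length at least $4k-8$, and that is not obtainable from pairwise arc exchanges between $C_1$ and $C_2$ alone.

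The paper's actual engine is different and is missing from your sketch entirely. One takes a longest path $R$ of length at most two in a component of $G-C$ (or $G-D$), applies the fan lemma from \emph{both ends of $R$} to get roughly $2(k-|R|)$ attachment points on $C$, and shows by rerouting $C$ through the fan paths and through $R$ that consecutive attachment points are at distance at least $2$ along $C$ (at least $2+|R|$ when an endpoint is reachable from both ends of $R$). Summing arc lengths gives $L\geq 4(k-2)$, and then the elementary bound $|X|\geq 2L-n$ (from $n\geq |V(C)|+|V(D)|-|X|$) yields $8k-n-16$; so the $8$ is $2$ (two cycles) times $2$ (two fan systems) times $2$ (minimum arc length), not $4\times k$ per cycle. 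The case $|R|\leq 1$ needs separate treatment, including Dirac's theorem ($L\geq\min\{2k,n\}$) when every component of $D-C$ is a single vertex, and a finer weighting of ``bicolored'' attachment points otherwise; none of this is anticipated by your outline. As written, the proposal identifies the correct final double count but supplies no workable route to the length lower bound that drives it.
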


Note that Conjecture P$\mathbf{(k,k)}$
is Hippchen's Conjecture \cite{Hippchen08}, and Conjecture C$\mathbf{(k,k)}$
is Smith's Conjecture \cite[Conjecture 5.2]{ Grotschel84}. The first main purpose of this paper is to relate Conjecture P$\mathbf{(k+1,r+1)}$ and 
Conjecture C$\mathbf{(k,r)}$, as in the next Theorem.

\begin{theorem}\label{thm:CimpliesP}
For every $k\geq 1$ and every $r\leq k$,
Conjecture $C(k+1,r+1)$ implies Conjecture $P(k,r)$.
\end{theorem}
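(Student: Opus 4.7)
The plan is to prove the implication by a single auxiliary construction: given a $k$-connected graph $G$, form $G'$ by adding one new vertex $v$ adjacent to every vertex of $G$ (the ``cone'' over $G$), and transfer the cycle statement on $G'$ to the path statement on $G$ by having $v$ play the role of ``closing up'' a path into a cycle.

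First I would verify that $G'$ is $(k+1)$-connected. Using the separator characterization recalled in the preliminaries, I take any separator $S$ of $G'$ and argue in two cases. If $v \notin S$, then in $G' - S$ the vertex $v$ is adjacent to every other remaining vertex, so $G' - S$ is connected, contradicting the fact that $S$ is a separator. Hence $v \in S$, and then $G' - S = G - (S \setminus \{v\})$ has more than one component, so $S \setminus \{v\}$ is a separator of $G$ (here using $k \geq 1$ so $G$ is connected), giving $|S \setminus \{v\}| \geq k$ and thus $|S| \geq k+1$.

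Second I would establish the correspondence between longest paths in $G$ and longest cycles in $G'$. Writing $\ell$ for the length of a longest path in $G$: for any path $P$ in $G$ with endpoints $x,y$, the cycle $P \cup \{xv, vy\}$ has length $|P|+2$, so the longest cycle in $G'$ has length at least $\ell+2$. Conversely, any cycle $C$ of $G'$ containing $v$ gives, after deleting $v$, a path in $G$ of length $|C|-2 \le \ell$; and any cycle $C$ of $G'$ avoiding $v$ lies in $G$, and deleting one of its edges gives a path in $G$ of length $|C|-1 \le \ell$, so $|C| \le \ell+1 < \ell+2$. Therefore every longest cycle of $G'$ passes through $v$, has length exactly $\ell+2$, and its vertex set has the form $V(P) \cup \{v\}$ for some longest path $P$ of $G$; conversely every longest path of $G$ yields such a cycle.

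Finally, given two longest paths $P_1, P_2$ of $G$, the associated cycles $C_1, C_2$ in $G'$ are longest cycles, so Conjecture $C(k+1,r+1)$ applied to the $(k+1)$-connected graph $G'$ yields $|V(C_1)\cap V(C_2)| \ge r+1$. Since $V(C_i)=V(P_i)\cup\{v\}$ with $v \notin V(G)$, we get
\[
|V(P_1)\cap V(P_2)| \;=\; |V(C_1)\cap V(C_2)| - 1 \;\ge\; r,
\]
which is precisely Conjecture $P(k,r)$. I expect no real obstacle beyond carefully checking the case analysis for connectivity and the strict inequality $|C| \le \ell+1 < \ell+2$ for cycles avoiding $v$, which is what guarantees that \emph{every} longest cycle of $G'$ uses $v$ and hence genuinely encodes a longest path of $G$.
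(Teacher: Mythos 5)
Your proposal is correct and follows essentially the same route as the paper: add a universal apex vertex, verify $(k+1)$-connectivity via the separator characterization, and close each longest path into a cycle through the new vertex. In fact you are slightly more thorough than the paper, since you explicitly verify that every longest cycle of $G'$ passes through the apex and has length exactly $\ell+2$, so that the cycles obtained from longest paths really are longest cycles of $G'$ — a step the paper leaves implicit.
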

\begin{proof}
Let $G$ be a $k$-connected graph on $n$ vertices.
Let $P$ and $Q$ be two longest paths in $G$.
We construct a new graph, $G'$, by adding a new vertex $s$ to $G$ and joining it to every vertex of $G$. 

We will show that $G'$ is $(k+1)$-connected.
Suppose by contradiction that it is not the case. Then $G'$ has a separator $X'$ with size at most $k$. As $s$ is universal in $G'$,
we must have $s\in X'$. Indeed, otherwise,
any pair of vertices of $G'$ different from $s$ are joined by $s$.
As $X'$ is a separator of $G'$, there exists two distinct vertices $u$ and $v$ in $G'$ such that there is no $uv$-path in $G'-X'$.
As $G$ is a subgraph of $G'$, there is also no
$uv$-path in $G-X'$, which implies
that $X' \cap V(G)$ is a separator in $G$.
As $s \in X'$, $|X' \cap V(G)| \leq k-1$, which contradicts the fact that $G$ is $k$-connected.

Let $P'$ and $Q'$ be the cycles in $G'$ with vertex set $P \cup \{s\}$ and $Q \cup \{s\}$,
respectively. If Conjecture $C(k+1,r+1)$
is true, then $P'$ and $Q'$ intersect in at least $r+1$ vertices, thus $P$ and $Q$ intersect in at least $r$ vertices.
\end{proof}

Conjecture P$\mathbf{(k,k)}$ has been proved
for $k=3$ \cite{Hippchen08}, $k=4$ \cite{Gutierrez2021} and
$k=5$ \cite{Cho2022}.
Conjecture C$\mathbf{(k,k)}$ has been proved
for $k \leq 7$, but with some conditions.
For the sake of completeness, we 
state exactly what has been proved for this conjecture in the next three propositions.

\begin{proposition}[{\cite[Theorem 1.2]{Gutierrez2021}}]\label{prop:2conn2vertices}
Every pair of longest cycles in a 2-connected graph intersect in at least two vertices.
\end{proposition}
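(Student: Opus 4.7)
The plan is to argue by contradiction. Suppose two longest cycles $C_1, C_2$ in a 2-connected graph $G$ satisfy $|V(C_1) \cap V(C_2)| \leq 1$, and aim to produce a cycle in $G$ strictly longer than $C_1$ (which has the same length as $C_2$ since both are longest). The construction will be by splicing together arcs of $C_1$ and $C_2$ through short auxiliary paths provided by the connectivity of $G$.

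Case 1: $V(C_1) \cap V(C_2) = \emptyset$. Since $G$ is 2-connected and $|V(C_1)|, |V(C_2)| \geq 3$, the set version of Menger's theorem yields two vertex-disjoint paths $P_1, P_2$ from $V(C_1)$ to $V(C_2)$; let their endpoints be $u_1, u_2 \in V(C_1)$ and $v_1, v_2 \in V(C_2)$, distinct on each cycle. The pair $\{u_1, u_2\}$ splits $C_1$ into two arcs $A_1, A_2$ with $|A_1|+|A_2|=|C_1|$, and $\{v_1, v_2\}$ splits $C_2$ into arcs $B_1, B_2$. For each $(i,j) \in \{1,2\}^2$, the concatenation $A_i \cup P_1 \cup B_j \cup P_2$ is a cycle in $G$, and summing their lengths gives $2|C_1| + 2|C_2| + 4(|P_1|+|P_2|)$, whose average is $|C_1| + |P_1| + |P_2| \geq |C_1| + 2$. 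Hence at least one of the four cycles strictly exceeds $|C_1|$, a contradiction.

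Case 2: $V(C_1) \cap V(C_2) = \{v\}$. Since $G$ is 2-connected, $G-v$ is connected, so there is a path $P$ in $G-v$ from some $u \in V(C_1)\setminus\{v\}$ to some $w \in V(C_2)\setminus\{v\}$ whose interior avoids $V(C_1) \cup V(C_2)$; the hypothesis $V(C_1)\cap V(C_2)=\{v\}$ forces $u \neq w$, so $|P| \geq 1$. Now $\{v,u\}$ splits $C_1$ into arcs $A_1, A_2$ and $\{v,w\}$ splits $C_2$ into arcs $B_1, B_2$; for each $(i,j)$, traversing $v \to u$ along $A_i$, $u \to w$ along $P$, and $w \to v$ along $B_j$ gives a cycle through $v$. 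The total length of the four resulting cycles is $2|C_1|+2|C_2|+4|P|$, with average $|C_1|+|P| > |C_1|$, again contradicting the maximality of $|C_1|$.

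The main obstacle is conceptual rather than computational: one must check that in both cases the four candidate closed walks are genuine simple cycles. In Case 1 this is immediate from the disjointness of $V(C_1)$ and $V(C_2)$ together with the internal disjointness of $P_1$ and $P_2$ from both cycles; in Case 2 it relies on having chosen $P$ inside $G-v$ with interior disjoint from $V(C_1) \cup V(C_2)$, which guarantees that each of the three pieces $A_i$, $P$, $B_j$ meets the next only at the expected endpoint. Once these disjointness conditions are secured, the averaging argument forces one of the four cycles to beat $|C_1|$, completing the contradiction.
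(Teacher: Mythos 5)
Your argument is correct. Note that the paper does not prove this proposition at all: it is imported verbatim as \cite[Theorem~1.2]{Gutierrez2021} and used as a black box, so there is no in-paper proof to compare against. Your two-case exchange-and-averaging argument is the standard route to this classical fact and is complete: in Case~1 the two disjoint $V(C_1)$--$V(C_2)$ paths guaranteed by the set version of Menger's theorem have length at least one each (since the cycles are disjoint), and in Case~2 the connectivity of $G-v$ supplies an $\bigl(V(C_1)\setminus\{v\}\bigr)$--$\bigl(V(C_2)\setminus\{v\}\bigr)$ path of positive length with interior off both cycles; in both cases the four spliced closed walks are genuinely simple cycles for the disjointness reasons you state, and their average length strictly exceeds $|C_1|$, giving the contradiction.
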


\begin{proposition}[{\cite[Theorem 1.2]{Grotschel84}}]\label{prop:k3a5}
Let $k \in \{3,\ldots,5\}$
and let $G$ be a 2-connected graph with at least $k+1$ vertices.
Let $C$ and $D$ be two longest cycles in $G$.
Then $V(C) \cap V(D)$ separates $G$.
\end{proposition}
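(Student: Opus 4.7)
The plan is to argue by contradiction. Suppose $S:=V(C)\cap V(D)$ does not separate $G$; apart from the trivial case $V(G)=S$ (handled by inspection using $n\ge k+1$), this means $G-S$ is connected. My goal is then to construct a cycle strictly longer than $\max\{|C|,|D|\}$, contradicting the maximality of $C$ and $D$.

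The key structural observation is that $C-S$ decomposes into maximal subpaths (arcs) $\alpha_1,\dots,\alpha_p$, and $D-S$ into arcs $\beta_1,\dots,\beta_q$, with each arc lying entirely within a single component of $G-S$ (since its internal vertices avoid $S$). Assuming both $V(C)\setminus V(D)$ and $V(D)\setminus V(C)$ are nonempty---the contrary reduces to a short degenerate case via 2-connectivity and $n\ge k+1$---the connectedness of $G-S$ yields a path $R\subseteq G-S$ from a vertex $x$ in the interior of some arc $\alpha_i$ to a vertex $y$ in the interior of some arc $\beta_j$. Cut $C$ at $x$ and $D$ at $y$, and glue the resulting pieces back together through $R$ in the two complementary ways. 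This produces two closed walks whose lengths sum to $|C|+|D|+2|R|>|C|+|D|$, so if both are simple cycles then at least one exceeds $\max\{|C|,|D|\}$ in length.

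The main obstacle is to ensure that the two spliced closed walks are genuinely simple cycles---or, when a naive splicing revisits some vertex of $S$, to obtain a longer cycle by an alternative rerouting through a different arc. This is a combinatorial case analysis on how the vertices of $S$ are cyclically arranged along $C$ versus $D$ and on how the endpoints of $R$ sit inside their respective arcs. The restriction $k\le 5$ bounds the number of configurations relevant to the intended application toward Smith's conjecture (where $|S|<k$ is assumed for contradiction), keeping the casework finite and checkable; the size bound $n\ge k+1$ supplies the room outside $S$ needed for the alternative reroutings, whose existence is ensured by 2-connectivity. Executing this case analysis is the technically delicate core of Gr\"otschel's original argument, and is precisely where larger values of $k$ appear to introduce configurations that cannot be handled by the same technique.
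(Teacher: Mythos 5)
This proposition is imported verbatim from Gr\"otschel's paper (his Theorem~1.2) and is stated here \emph{without proof}; there is no argument in the present paper to compare yours against, so your write-up has to stand on its own as a proof, and it does not. You explicitly defer ``the technically delicate core'' --- the case analysis over how the vertices of $S$ are arranged along $C$ and $D$ and where the connecting path attaches --- and that deferred part \emph{is} the theorem. What remains is only the standard opening move (if $S$ does not separate, find a path $R$ in $G-S$ from $V(C)\setminus V(D)$ to $V(D)\setminus V(C)$ and try to build a longer cycle), which by itself proves nothing.

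Moreover, the one concrete construction you do describe is not valid. Cutting the cycle $C$ at the single vertex $x$ and the cycle $D$ at the single vertex $y$ and ``gluing through $R$ in the two complementary ways'' does not produce two closed walks of total length $|C|+|D|+2|R|$: that identity is the classical one for a single chord-path whose \emph{both} ends lie on the \emph{same} cycle, splitting it into two arcs. With one path joining two distinct cycles there is only one way to traverse $C$, $R$, $D$, and $R$ again, and the result uses $R$ twice, so it is never a simple cycle. To assemble a cycle out of arcs of both $C$ and $D$ you need two vertex-disjoint connections between them --- e.g.\ $R$ together with a suitably placed vertex of $S$, or two disjoint $C$--$D$ paths --- and choosing these so that the spliced cycle is simple and strictly longer than $\max\{|C|,|D|\}$ is exactly the hard combinatorial content you have not supplied. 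Finally, your explanation of where $k\le 5$ enters is off: the proposition's conclusion is that $S=V(C)\cap V(D)$ separates $G$ with no hypothesis $|S|<k$, so framing the casework as ``assuming $|S|<k$ for contradiction'' conflates the proposition with its later application to Smith's conjecture. If you want to include this result, the honest course is to cite it, as the paper does.
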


\begin{proposition}[{\cite[Theorem 1.2]{Steward1995}}]\label{prop:k6a7}
Let $k \in \{6,7\}$
and let $G$ be a graph with  circumference at least $k+1$.
Let $C$ and $D$ be two distinct longest cycles in $G$. Then $V(C) \cap V(D)$ separates $G$.
\end{proposition}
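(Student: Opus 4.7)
The plan is to argue by contradiction. Assuming $G$ is connected, suppose $I := V(C) \cap V(D)$ does not separate $G$, so that $G - I$ is connected; since $C$ and $D$ are distinct longest cycles with $|C|=|D|\geq k+1$, both $V(C)\setminus I$ and $V(D)\setminus I$ are non-empty. The first step is to decompose each cycle into \emph{arcs} relative to $I$: writing $I$ in the cyclic order induced by $C$ as $v_1,\ldots,v_{|I|}$ partitions $C$ into arcs $A_1,\ldots,A_{|I|}$ whose endpoints lie in $I$ and whose internal vertices lie in $V(C)\setminus V(D)$, and likewise $D$ decomposes into arcs $B_1,\ldots,B_{|I|}$ under the (possibly different) cyclic order of $I$ along $D$. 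The maximality of $|C|=|D|$ supplies strong balancing constraints: replacing a $C$-arc between two vertices of $I$ by the complementary $D$-arc cannot strictly increase the total length, and symmetrically for $D$.

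The second step is to exploit the connectivity of $G-I$ to locate an augmenting path $P \subseteq G - I$ from an internal vertex $x$ of some $C$-arc $A$ to an internal vertex $y$ of some $D$-arc $B$. Splicing $C$ and $D$ along $P$ yields a closed walk, and by a pigeonhole argument on the four pieces of $A$ and $B$ cut off at $x$ and $y$, at least one of the natural splices produces a closed walk of length strictly exceeding $|C|$, which will contradict maximality.

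The main obstacle will be guaranteeing that the spliced walk is a \emph{simple} cycle rather than a closed walk with repeated vertices, and simultaneously that its length strictly exceeds $|C|$. This forces a detailed case analysis over the cyclic interleaving of $I$ on $C$ versus $D$ and over the lengths of the short arcs touched by the splice; when $|I|$ is small one also has to consider augmenting paths between two arcs of the same cycle, which requires a separate reshuffling argument. The restriction $k\in\{6,7\}$ reflects the fact that this case analysis remains tractable only for small $|I|$, while the circumference bound $|C|\geq k+1$ ensures that the arcs carry enough length for the exchange to succeed and rules out degenerate configurations.
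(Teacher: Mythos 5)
The paper does not prove this proposition at all: it is quoted verbatim from the cited reference (\cite[Theorem~1.2]{Steward1995}) and used as a black box in the proof of Theorem~\ref{thm:Ckk2a7}, so there is no in-paper argument to compare yours against. Judged on its own merits, your proposal is an outline rather than a proof, and the outline stops exactly where the real difficulty begins. The decisive step --- ``by a pigeonhole argument on the four pieces of $A$ and $B$ cut off at $x$ and $y$, at least one of the natural splices produces a closed walk of length strictly exceeding $|C|$'' --- is asserted, not established, and in this naive form it is not true: a single crossing path between $C$ and $D$ together with the four local splices it offers does not in general force a longer cycle. That local-exchange engine is precisely what underlies the much weaker quantitative bound $ck^{3/5}$ of Chen~\etal\ (Theorem~\ref{thm:Chen} in this paper); if it sufficed to prove separation outright, Smith's conjecture would not still be open. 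Similarly, the ``strong balancing constraints'' you invoke from swapping a $C$-arc for ``the complementary $D$-arc'' are generally unavailable, because an arc of $D$ joins two vertices of $I$ that need not be consecutive on $C$, so the swap need not produce a cycle at all when the two cyclic orders of $I$ interleave.

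You acknowledge the remaining obstacles (simplicity of the spliced walk, the interleaving of $I$ on $C$ versus $D$, paths between two arcs of the same cycle) and defer them to ``a detailed case analysis.'' But that case analysis \emph{is} the theorem: the published proofs for circumference $k+1$ with $k\in\{6,7\}$ are long, delicate, and specific to these small values, and nothing in your sketch indicates how the bound $|C|\ge k+1$ or the restriction $k\le 7$ actually enters the argument beyond a remark that they ``reflect tractability.'' As it stands, the proposal reduces the proposition to an unproven claim of comparable difficulty, so there is a genuine gap covering essentially the entire content of the result.
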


\begin{theorem}[{\cite{Grotschel84,Gutierrez2021,Steward1995}}]\label{thm:Ckk2a7}
For $2 \leq k\leq 7$, Conjecture $C(k,k)$ is true.
\end{theorem}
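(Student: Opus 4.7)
The plan is to split on $k$ and, in each range, invoke the corresponding proposition from the preceding list, converting its ``$V(C)\cap V(D)$ separates $G$'' conclusion into a numerical lower bound via the introductory fact that every separator of a $k$-connected graph has size at least~$k$.

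For $k=2$ the statement is Proposition~\ref{prop:2conn2vertices} verbatim. For $3\le k\le 5$, fix a $k$-connected graph $G$ and two longest cycles $C,D$; since $G$ is $k$-connected it is $2$-connected and has at least $k+1$ vertices, so Proposition~\ref{prop:k3a5} applies and yields that $S:=V(C)\cap V(D)$ separates $G$. Either $S=V(G)$, in which case $|S|\ge k+1$, or $S$ is a proper separator of the $k$-connected graph $G$, in which case $|S|\ge k$; either way $|V(C)\cap V(D)|\ge k$.

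For $k\in\{6,7\}$ I would first observe, by the classical longest-path argument (the endpoint of a longest path has at least $k$ neighbours on the path, yielding a cycle of length $\ge k+1$), that every graph of minimum degree at least $k$ has circumference at least $k+1$; in particular every $k$-connected graph does. In the subcase $C=D$ this already gives $|V(C)\cap V(D)|=|V(C)|\ge k+1$. In the subcase $C\ne D$ the circumference bound ensures that the hypothesis of Proposition~\ref{prop:k6a7} is met, whence $V(C)\cap V(D)$ separates $G$ and, exactly as in the previous paragraph, $|V(C)\cap V(D)|\ge k$.

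There is no genuine obstacle: the main point is to reconcile the statements of Propositions~\ref{prop:k3a5} and~\ref{prop:k6a7}, whose hypotheses are phrased in terms of $2$-connectivity or circumference rather than $k$-connectivity and whose conclusions are qualitative (``separates $G$'') rather than numerical, with the $k$-connectivity hypothesis of Theorem~\ref{thm:Ckk2a7}. The only extra ingredient, needed in the range $k\in\{6,7\}$, is the folklore bound ``circumference $\ge k+1$ in any graph of minimum degree $\ge k$'', which is immediate from the longest-path argument and also takes care of the trivial subcase $C=D$ that Proposition~\ref{prop:k6a7} explicitly excludes.
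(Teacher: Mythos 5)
Your proof is correct and takes essentially the same route as the paper's: $k=2$ is Proposition~\ref{prop:2conn2vertices}, and for $k\ge 3$ one uses $\delta(G)\ge k$ to get circumference at least $k+1$ (which both handles $C=D$ and activates Proposition~\ref{prop:k6a7}), then converts the separation conclusions of Propositions~\ref{prop:k3a5} and~\ref{prop:k6a7} into $|V(C)\cap V(D)|\ge k$ via $k$-connectivity. The one small difference is that the paper dispatches the subcase $C=D$ uniformly for all $k\ge 3$ before citing either proposition, whereas you do so only for $k\in\{6,7\}$; since Proposition~\ref{prop:k3a5} is most safely read as concerning distinct cycles (its conclusion fails for $C=D$ in a Hamiltonian graph), you should apply your circumference observation to the $C=D$ subcase for $3\le k\le 5$ as well.
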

\begin{proof}
When $k=2$, the proof follows by Proposition \ref{prop:2conn2vertices}. So let us assume that $k \geq 3$.
Let $G$ be a $k$-connected graph with two longest cycles $C$ and $D$.
As $G$ is $k$-connected, $\delta(G) \geq k$,
which implies $|V(G)| \geq |V(C)| \geq k+1$.
If $C=D$, then $|V(C)| \cap |V(D)| \geq k$,
so let us assume that $C \neq D$.
Then, by Propositions \ref{prop:k3a5} and \ref{prop:k6a7}, $V(C) \cap V(D)$ separates $G$. As $G$ is $k$-connected, $|V(C) \cap V(D)|\geq k$ and the proof follows.
\end{proof}

By Theorems \ref{thm:CimpliesP} and \ref{thm:Ckk2a7}, we have the next result.
\begin{theorem}
For $1 \leq k\leq 6$, Conjecture $P(k,k)$
is true. That is, in every $k$-connected graph with $k\leq 6$, every pair of longest paths intersect in at least $k$ vertices.
\end{theorem}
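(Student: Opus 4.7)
The plan is to apply Theorem \ref{thm:CimpliesP} directly, taking $r = k$ for each $k \in \{1,\ldots,6\}$. The theorem asserts that Conjecture $C(k+1,r+1)$ implies Conjecture $P(k,r)$; with $r=k$ this specializes to the statement that Smith's conjecture for $(k+1)$-connected graphs (that is, Conjecture $C(k+1,k+1)$) implies Hippchen's conjecture for $k$-connected graphs (that is, Conjecture $P(k,k)$).

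As $k$ ranges over $\{1,2,3,4,5,6\}$, the index $k+1$ ranges over $\{2,3,4,5,6,7\}$, which is exactly the range covered by Theorem \ref{thm:Ckk2a7}. So for each such $k$, the hypothesis of Theorem \ref{thm:CimpliesP} is available, and we immediately conclude that $P(k,k)$ holds. Concretely, I would write: fix $k$ with $1\leq k\leq 6$. By Theorem \ref{thm:Ckk2a7} applied to the integer $k+1\in\{2,\ldots,7\}$, Conjecture $C(k+1,k+1)$ is true. By Theorem \ref{thm:CimpliesP} with this $k$ and with $r=k$, Conjecture $P(k,k)$ is true. Since $k$ was arbitrary in the range, the claim follows.

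There is essentially no obstacle: the work has been packaged into the two theorems already proved, and the statement is their straightforward combination. The only point worth double-checking is the boundary case $k=1$, where $P(1,1)$ asserts that any two longest paths in a connected graph share a vertex; this is obtained from $C(2,2)$ via Proposition \ref{prop:2conn2vertices}, so the reduction handles this case uniformly with the others and no separate argument is needed.
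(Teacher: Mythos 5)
Your proof is correct and is exactly the paper's argument: the paper derives this theorem directly by combining Theorem \ref{thm:CimpliesP} (with $r=k$) and Theorem \ref{thm:Ckk2a7} for $k+1\in\{2,\ldots,7\}$, just as you do.
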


For an arbitrary $k$, we have the next result due to Chen \etal \cite{Chen98}.
\begin{theorem}[{\cite[Theorem 2]{Chen98}}]
\label{thm:Chen}
For any $k \geq 2$, Conjecture $C(k,ck^{3/5})$ is true,
where ${c=1/(\sqrt[3]{256}+3)^{3/5}}$.
\end{theorem}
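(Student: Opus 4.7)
The plan is to prove this by contradiction and by a Menger-style cycle-exchange argument. Suppose $C$ and $D$ are longest cycles in a $k$-connected graph $G$ with $|V(C)\cap V(D)| < ck^{3/5}$, and derive a cycle strictly longer than $C$, contradicting its maximality. Set $I = V(C)\cap V(D)$ with $|I| = t$, and let the arcs of $C$ between consecutive vertices of $I$ be $A_1,\dots,A_t$ (of total length $|C|-t$), and similarly $B_1,\dots,B_t$ for $D$. Since $|C|\ge |D|$ may be assumed, the pigeonhole principle yields a long arc $A_{i^*}$ of length at least $(|C|-t)/t$. The strategy is to replace such an arc by a detour formed from pieces of $D\setminus I$ connected to $A_{i^*}$ through short $A_{i^*}$--$B_j$ paths in $G-I$.

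To produce such detours, I would apply Menger's theorem to $G-I$: because $G$ is $k$-connected and $|I|=t$, any two disjoint vertex sets in $G-I$ that separate as bottlenecks still admit at least $k-t$ internally disjoint paths between them. Using this repeatedly between $A_{i^*}$ and each $B_j$ — or more cleverly, between a chosen collection of arcs $\{A_i\}$ and the $\{B_j\}$ — one obtains many candidate connecting paths. From two such paths with endpoints $x,y\in A_{i^*}$ and $x',y'\in B_j$, occurring in a compatible cyclic order, one can splice $C$ and $D$ into a new cycle $C'$: keep $C\setminus A_{i^*}[x,y]$, the two connecting paths, and the piece $B_j[x',y']$. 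If the lengths of the two connecting paths are small compared to the gain $|B_j[x',y']| - |A_{i^*}[x,y]|$, then $|C'| > |C|$, a contradiction.

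The main obstacle is turning this qualitative picture into the specific exponent $3/5$, which is what makes this theorem nontrivial compared to the trivial $\Omega(1)$ or $\Omega(\sqrt{k})$ bounds. One sees a trade-off with two competing parameters: when $t$ is small, each arc $A_i$ is long (good for producing a positive gain) but fewer parallel Menger paths are available; when $t$ is large, more paths are available but the arcs are short so the swap may not gain length. One would formalize this by double-counting (arc, path)-incidences and bounding the typical length of a connecting path in $G-I$ in terms of the aggregate arc structure. Optimizing a constraint of the form ``number of arcs $\times$ (bound on connector length)$^a$ $\leq$ something polynomial in $k$'' produces an extremal balance that yields the exponent $3/5$ and the explicit constant $c = 1/(\sqrt[3]{256}+3)^{3/5}$. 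I would expect the hardest step to be controlling the lengths of the Menger paths inside $G-I$ — cheap connecting paths are what enable the swap, and since $G-I$ is only $(k-t)$-connected, one needs a careful argument (possibly by further contradiction or by iterating the construction) to guarantee that enough short connectors exist, rather than getting long, cycle-destroying detours.
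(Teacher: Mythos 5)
The paper offers no proof of Theorem~\ref{thm:Chen} at all: it is imported verbatim from \cite{Chen98}. So the only question is whether your sketch would itself constitute a proof, and it would not, for two concrete reasons. The first is the splicing step. To make $C - A_{i^*}[x,y] + P_1 + B_j[x',y'] + P_2$ into a cycle, the connectors $P_1,P_2$ must be internally disjoint from \emph{all} of $V(C)\cup V(D)$ (not merely from $I$), disjoint from each other, and attached in a compatible cyclic order. Menger's theorem applied in $G-I$ deletes only $I$; the paths it produces may re-enter $C$ and $D$ arbitrarily often, so the spliced object need not be a simple cycle. Repairing this is the heart of any argument of this type; it is precisely why the paper's own Theorem~\ref{thm:Ck8k-n-16} uses the Fan Lemma (Proposition~\ref{prop:fan-lemma}), which guarantees paths pairwise meeting only at their common source, and even then needs the careful case analysis of Claims~\ref{claim:wuvgeq}--\ref{claim:C'>F'}. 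Note also that your pigeonhole points the wrong way: to lengthen $C$ you want to delete a \emph{short} arc and insert a long detour, so singling out the \emph{longest} arc $A_{i^*}$ for replacement is the least favorable choice, and ``cheap connecting paths'' work against you rather than for you --- the condition for a gain is $|P_1|+|P_2|+|B_j[x',y']| > |A_{i^*}[x,y]|$, which is easier to satisfy when the connectors are long; the real danger of long connectors is that they hit $C\cup D$, not that they fail to produce a gain.

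The second and larger gap is that the exponent and the constant are asserted rather than derived. The sentence claiming that optimizing ``a constraint of the form number of arcs times a power of the connector-length bound'' yields $3/5$ and $c=1/(\sqrt[3]{256}+3)^{3/5}$ is a placeholder for the entire content of the theorem. Without the explicit pair of inequalities being traded off, there is no way to check that the balance lands at $k^{3/5}$ rather than at $k^{1/2}$ (which is what the naive pigeonhole-versus-Menger trade-off typically gives), nor where the specific constant comes from. As it stands the proposal is a plausible opening move plus an acknowledgment that the hard quantitative work remains to be done, so it cannot be accepted as a proof of the stated bound.
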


By Theorems \ref{thm:CimpliesP} and \ref{thm:Chen}, we have the next result.
\begin{theorem}
For any $k$, Conjecture $P(k,c(k+1)^{3/5}-1)$ is true,
where $c=1/(\sqrt[3]{256}+3)^{3/5}$.
\end{theorem}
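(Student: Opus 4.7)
The plan is simply to chain together the two headline results that have already appeared in this section: the cycle-to-path reduction of Theorem \ref{thm:CimpliesP} and the asymptotic cycle-intersection bound of Theorem \ref{thm:Chen}. No new graph-theoretic argument is required; the statement is a corollary.

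First I would apply Theorem \ref{thm:Chen} with its parameter instantiated as $k+1$. Since the hypothesis of Theorem \ref{thm:Chen} is that the connectivity is at least $2$, and Conjecture $P(k,\cdot)$ is only meaningful for $k\geq 1$, this substitution is admissible throughout the range we care about. The conclusion is that Conjecture $C(k+1,\, c(k+1)^{3/5})$ holds, where $c=1/(\sqrt[3]{256}+3)^{3/5}$.

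Next I would invoke Theorem \ref{thm:CimpliesP} with $r := c(k+1)^{3/5} - 1$, chosen so that $r+1$ exactly matches the cycle-intersection bound produced in the previous step. Before applying the theorem one must verify the hypothesis $r\leq k$, but this is immediate from $c<1$ together with $(k+1)^{3/5}\leq k+1$ for $k\geq 0$, which give $c(k+1)^{3/5}<k+1$ and hence $r<k$. Theorem \ref{thm:CimpliesP} then transfers the bound from pairs of longest cycles in a $(k+1)$-connected graph to pairs of longest paths in a $k$-connected graph, yielding Conjecture $P(k,\, c(k+1)^{3/5}-1)$, which is the desired statement.

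There is essentially no obstacle: the $-1$ appearing in the claimed bound is precisely the unit lost when, in the proof of Theorem \ref{thm:CimpliesP}, one removes the universal vertex $s$ from the cycles $P'$ and $Q'$ in $G'$ to recover the original longest paths $P$ and $Q$ in $G$. It is worth noting, for sanity, that for small values of $k$ the expression $c(k+1)^{3/5}-1$ is nonpositive and the statement becomes vacuous; for such $k$ the stronger bounds of Theorem~\ref{thm:Ckk2a7} (via Theorem~\ref{thm:CimpliesP}) are what one should actually use.
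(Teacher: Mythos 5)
Your proposal is correct and matches the paper exactly: the paper derives this theorem as an immediate corollary by applying Theorem~\ref{thm:Chen} at connectivity $k+1$ and then Theorem~\ref{thm:CimpliesP} with $r=c(k+1)^{3/5}-1$. Your additional verification that $r\leq k$ and your remark about vacuousness for small $k$ are sensible but not needed beyond what the paper's one-line justification already asserts.
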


Cho \etal \cite{Cho2022} showed that $P(k,k)$ is true when $k \geq \frac{n+2}{5}$, where $n$ is the number of vertices of
the graph \footnote{If we try to be complete formal, we should add the number of vertices as a third parameter to our familes of conjectures, but we will prefer to not complicate the notation.}. In fact they proved the following stronger statement.

\begin{theorem}[{\cite[Theorem 1.4]{Cho2022}}]
For any $k \geq 2$, Conjecture $P(k,\min\{k,\ceil{\frac{8k-n-4}{3}}\})$ is true.
\end{theorem}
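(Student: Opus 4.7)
The plan is a proof by contradiction, in the spirit of prior work on these conjectures. Assume $G$ is a $k$-connected graph on $n$ vertices carrying two longest paths $P$ and $Q$ whose intersection $I=V(P)\cap V(Q)$ satisfies $t:=|I|<\min\{k,\ceil{(8k-n-4)/3}\}$. From $t<k$ one deduces that $I$ does not separate $G$, while $3t<8k-n-4$ rewrites as $n<8k-3t-4$. The objective is to derive a contradiction with this latter inequality by producing a lower bound $n\ge 8k-3t-4$ from structural data.

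First, I would collect elementary size data: writing $\ell=|P|=|Q|$, one has $\ell\ge k$ (from $\delta(G)\ge k$ together with the non-extendability of $P$), and $|V(P)\cup V(Q)|=2\ell+2-t\le n$. Next I would fix the endpoints $p_1,p_2$ of $P$ and $q_1,q_2$ of $Q$, and consider the \emph{external segments} of $P$ and $Q$: maximal subpaths whose internal vertices avoid $I$. Segments containing an endpoint of $P$ or $Q$ are bounded on only one side by $I$; the remaining external segments are bounded on both sides.

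The crux is an exchange argument driven by $k$-connectivity. For each endpoint of $P$, Menger's theorem supplies $k-t$ internally disjoint paths from that endpoint to $V(Q)\setminus I$ avoiding $I$, and symmetrically for $Q$. The longest-path property then forces inequalities of the shape
\[
\text{(length of $Q$-piece)} + \text{(length of Menger path)} + \text{(length of complementary $P$-piece)}\;\le\;\ell,
\]
which translate into vertex lower bounds on disjoint subsets of $V(G)$. Running this analysis symmetrically with $P$ and $Q$ swapped and combining the inequalities obtained at the two endpoint pairs should yield the coefficients $8$ and $3$ appearing in the target bound.

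The main obstacle is the bookkeeping. The vertex sets counted in the different exchange inequalities overlap in controlled but intricate ways, and one must ensure that Menger paths issuing from different endpoints are pairwise disjoint on the regions where they matter, not merely internally disjoint within a single bundle. I expect the hardest step to be choosing the right endpoint pairings and an ordering of the $I$-vertices along $P$ and $Q$ so that every vertex of $V(G)$ is counted at most once and the resulting global inequality is sharp enough to contradict $n<8k-3t-4$.
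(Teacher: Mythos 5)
This statement is quoted from Cho \etal\ and is not proved in the present paper at all, so there is no internal proof to compare against; I can only judge your proposal on its own terms, and as written it is a plan rather than a proof. The gap sits exactly where the theorem lives: you never derive the inequality $n\ge 8k-3t-4$. Your setup is fine --- the contradiction hypothesis $t<\lceil(8k-n-4)/3\rceil$ does rewrite as $n<8k-3t-4$, the facts $\ell\ge k$ and $|V(P)\cup V(Q)|=2\ell+2-t\le n$ are correct, and $k$-connectivity should indeed enter through fans of roughly $k-t$ paths avoiding $I$. But the passage from exchange inequalities of the shape ``($Q$-piece) $+$ (Menger path) $+$ (complementary $P$-piece) $\le \ell$'' to a global count with the coefficients $8$ and $3$ is the entire content of the theorem, and you explicitly defer it (``should yield'', ``I expect the hardest step to be''). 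The two obstacles you yourself name --- that Menger paths issued from different endpoints need not be pairwise disjoint where it matters, and that the vertex sets counted by different exchange inequalities overlap --- are genuine, and until they are resolved the inequalities cannot legitimately be summed.

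A secondary concern is that the decomposition by the four endpoints of $P$ and $Q$ is not obviously the right one. The technique that does produce bounds of this type (in Cho \etal, and in the cycle analogue proved in Section~\ref{section:Ckr} of this paper) starts instead from a short path $R$ in a component of $G$ minus one of the two objects, sends fans from \emph{both} ends of $R$ to that object, and studies the order of the feet along it: each gap between consecutive feet is forced to be long by a rerouting argument, and summing over the gaps gives a linear lower bound on $\ell$ (equivalently $L$), which is then played against $|V(P)\cup V(Q)|\le n$. A single exchange at an endpoint of $P$ does not force many \emph{disjoint} long stretches of the graph, which is what a coefficient as large as $8$ requires. So beyond supplying the missing computation, I would reconsider the basic decomposition before investing in the bookkeeping.
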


No similar result for $C(k,r)$ has been given
in the literature. In Section \ref{section:Ckr} we show that for any $k \geq 2$, Conjecture $C(k,\min\{k,8k-n-16\})$ is true (Theorem \ref{thm:Ck8k-n-16}).
This will imply the following two results.

\begin{theorem}
For any $k \geq 2$, Conjecture $P(k,\min\{k,8k-n-9\})$ is true.
\end{theorem}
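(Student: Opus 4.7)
The plan is to deduce this as a direct corollary of Theorems~\ref{thm:CimpliesP} and~\ref{thm:Ck8k-n-16}. Given a $k$-connected graph $G$ on $n$ vertices with two longest paths $P$ and $Q$, I will apply the universal-vertex construction from the proof of Theorem~\ref{thm:CimpliesP}: form $G'$ by adjoining a new vertex $s$ adjacent to every vertex of $G$. Then, as argued there, $G'$ is $(k+1)$-connected on $n+1$ vertices, and closing $P$ and $Q$ through $s$ produces cycles $P'$ and $Q'$ in $G'$.

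A brief check shows $P'$ and $Q'$ are longest cycles in $G'$: any cycle of $G'$ either lies in $G$, in which case its length is at most $|V(P)|$ (removing one edge from such a cycle gives a path in $G$), or passes through $s$, in which case it is $s$ closed through a path of $G$ of length at most $|V(P)|-1$. Either way its length is at most $|V(P)|+1$, the length of $P'$.

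Now I invoke Theorem~\ref{thm:Ck8k-n-16} on $G'$ with the substitutions $k\mapsto k+1$ and $n\mapsto n+1$. This gives
\[
|V(P')\cap V(Q')|\ \geq\ \min\bigl\{k+1,\,8(k+1)-(n+1)-16\bigr\}\ =\ \min\{k+1,\,8k-n-9\}.
\]
Since $V(P')\cap V(Q')=(V(P)\cap V(Q))\cup\{s\}$, subtracting the universal vertex $s$ yields the desired lower bound on $|V(P)\cap V(Q)|$ in $G$.

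All the substantive work has been absorbed into Theorem~\ref{thm:Ck8k-n-16}, whose proof is the task of Section~\ref{section:Ckr}. There is no genuine obstacle here: the present statement is essentially bookkeeping, combining the cycle bound with the path-to-cycle reduction of Theorem~\ref{thm:CimpliesP}. In particular, the threshold $k\geq (n+9)/7$ for Hippchen's conjecture advertised in the introduction emerges exactly when the minimum is achieved by the first term.
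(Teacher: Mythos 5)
Your approach is exactly the paper's intended one: the theorem is presented there as a direct consequence of Theorems~\ref{thm:CimpliesP} and~\ref{thm:Ck8k-n-16} via the universal-vertex construction, and your verification that $P'$ and $Q'$ are in fact longest cycles of $G'$ (a point the paper's proof of Theorem~\ref{thm:CimpliesP} leaves implicit, though it is needed to invoke the cycle result) is correct and welcome.

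However, the last line of your argument does not deliver the stated constant. You correctly compute, using that $G'$ has $n+1$ vertices and is $(k+1)$-connected, that $|V(P')\cap V(Q')|\geq\min\{k+1,\,8(k+1)-(n+1)-16\}=\min\{k+1,\,8k-n-9\}$. Removing the universal vertex $s$ then gives $|V(P)\cap V(Q)|\geq\min\{k+1,\,8k-n-9\}-1=\min\{k,\,8k-n-10\}$, which is weaker by one than the claimed $\min\{k,\,8k-n-9\}$ whenever the second term attains the minimum. To land on $8k-n-9$ after subtracting $1$, the cycle bound for $G'$ would have to read $8(k+1)-n-16$, i.e.\ one would have to substitute $n$ rather than $n+1$ for the order of $G'$; that appears to be what the paper implicitly does, so your write-up faithfully reproduces the paper's derivation but inherits its off-by-one. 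As written, the sentence ``subtracting the universal vertex $s$ yields the desired lower bound'' is not literally true: what the chain of cited results actually establishes is Conjecture $P(k,\min\{k,8k-n-10\})$, with the corresponding threshold $k\geq(n+10)/7$ for Hippchen's conjecture, unless an extra vertex of intersection can be recovered by exploiting the special structure of $G'$ (its universal vertex lies on every longest cycle) inside the proof of Theorem~\ref{thm:Ck8k-n-16}. You should either prove that strengthening or adjust the constant.
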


\begin{corollary}
For any $k \geq \frac{n+16}{7}$, Conjecture $C(k,k)$ is true.
For any $k \geq \frac{n+9}{7}$, Conjecture $P(k,k)$ is true.
\end{corollary}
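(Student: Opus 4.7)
The corollary is a direct arithmetic consequence of the two theorems immediately preceding it, so the ``proof'' is really a one-line unpacking of the $\min$ in each statement. The plan is therefore to take Conjecture $C(k,\min\{k,8k-n-16\})$ and Conjecture $P(k,\min\{k,8k-n-9\})$ as given (these are the two theorems just above, the cycle one coming from Theorem~\ref{thm:Ck8k-n-16} in Section~\ref{section:Ckr} and the path one being deduced from it), and simply determine when the second term in each minimum is already at least $k$.

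For the cycle statement, I would observe that $8k - n - 16 \geq k$ is equivalent to $7k \geq n + 16$, i.e.\ $k \geq (n+16)/7$. Under this hypothesis, $\min\{k, 8k-n-16\} = k$, and hence Conjecture $C(k,k)$ follows from Conjecture $C(k,\min\{k,8k-n-16\})$. For the path statement, I would do the analogous computation: $8k - n - 9 \geq k$ iff $7k \geq n + 9$ iff $k \geq (n+9)/7$; under this hypothesis Conjecture $P(k,\min\{k,8k-n-9\})$ yields Conjecture $P(k,k)$. A minor technicality is that both referenced theorems are stated for $k \geq 2$, but the hypothesis $k \geq (n+16)/7$ (resp.\ $k \geq (n+9)/7$) combined with $k \leq n-1$ forces $k$ to be at least a small constant, so this condition is automatic (and in any case the small $k$ cases are already covered by the earlier results of Section~2).

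There is essentially no obstacle here: all of the real content lives in the two theorems that are being invoked, in particular the cycle theorem $C(k,\min\{k,8k-n-16\})$ whose proof is the main body of Section~\ref{section:Ckr}. The corollary is just the ``corner'' of the tradeoff curve where the linear lower bound $8k-n-16$ (respectively $8k-n-9$) surpasses the trivial upper bound $k$, and so should be written in two short sentences, one per inequality.
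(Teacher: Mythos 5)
Your proposal is correct and is exactly the argument the paper intends: the corollary is stated without proof precisely because it is the one-line unpacking of the $\min$ in the two preceding theorems, via $8k-n-16\geq k \iff k\geq (n+16)/7$ and $8k-n-9\geq k \iff k\geq (n+9)/7$. Your remark about the $k\geq 2$ hypothesis being automatic is a reasonable extra precaution but not something the paper bothers with.
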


\section{A new result on C(k,r)}\label{section:Ckr}

Our proof rely in two well-known facts, that we state in the following propositions. The first proposition is also known as Fan lemma.

\begin{proposition}[{\cite[Proposition 9.5]{BondyM08}}]\label{prop:fan-lemma}
 Let~$G$ be a~$k$-connected graph. Let~${v \in V(G)}$
and~$S \subseteq V(G) \setminus \{v\}$.
If~$|S|\geq k$, then there exists a set of~$k$~$v$-$S$ internally disjoint paths.
Moreover, every two paths in this set have~$\{v\}$
as their intersection. 
\end{proposition}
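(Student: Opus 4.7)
The plan is to reduce the Fan lemma to a single application of Menger's theorem by adding one universal auxiliary vertex. Concretely, I would form $G'$ from $G$ by adjoining a new vertex $s$ whose neighbourhood is exactly $S$; then every $v$-$s$ path in $G'$ must enter $S$ at some point, and its prefix up to that point is a $v$-$S$ walk in $G$. The strategy is to produce $k$ internally disjoint $v$-$s$ paths in $G'$ and then truncate each one suitably to obtain the desired $v$-$S$ paths in $G$.

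First I would check that every $v$-$s$ separator in $G'$ has size at least $k$, arguing by cases. If the separator $X$ contains all of $S$, then $|X|\geq |S|\geq k$. Otherwise some $w\in S$ avoids $X$; since $w$ remains adjacent to $s$ in $G'-X$, the vertices $v$ and $w$ must lie in different components of $G'-X$, and because all edges of $G'$ outside $G$ are incident to $s$, they also lie in different components of $G-(X\cap V(G))$. Thus $X\cap V(G)$ is a $v$-$w$ separator in $G$, and since $G$ is $k$-connected and $v\neq w$ (as $v\notin S$), we get $|X|\geq |X\cap V(G)|\geq k$. Menger's theorem then delivers $k$ internally disjoint $v$-$s$ paths $P_1,\ldots,P_k$ in $G'$.

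Next I would pass back from $G'$ to $G$ by truncating each $P_i$ at the \emph{first} vertex $w_i\in S$ encountered along $P_i$ starting from $v$, letting $P_i'$ denote this prefix. By choice of $w_i$, the path $P_i'$ has no internal vertex in $S$ and does not contain $s$, so it is a legitimate $v$-$S$ path in $G$. Internal disjointness of the $P_i$ in $G'$ then forces the pairwise intersection of the $P_i'$ to be contained in $\{v,s\}\cap V(P_i')=\{v\}$; since each $w_i$ is an internal vertex of its original $P_i$, the terminal vertices $w_1,\ldots,w_k$ are pairwise distinct. This yields both the family of internally disjoint $v$-$S$ paths and the ``moreover'' clause that every two of them meet exactly in $\{v\}$.

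The main delicate point is this truncation step: one must take the \emph{first}, not the last, entry into $S$, so that the definition of a $v$-$S$ path (internal vertices outside $S$) is respected while the disjointness inherited from Menger is preserved. The rest is a straightforward case analysis for $v$-$s$ separators in $G'$ on top of Menger's theorem.
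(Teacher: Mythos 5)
The paper does not prove this proposition at all: it is quoted verbatim from Bondy and Murty \cite[Proposition 9.5]{BondyM08} and used as a black box, so there is nothing internal to compare against. Your argument is the classical derivation of the Fan Lemma from Menger's theorem (add a vertex $s$ with $N(s)=S$, verify every $v$-$s$ cut has size at least $k$, apply Menger, truncate each path at its first vertex in $S$), and it is correct and complete, including the two points that are usually glossed over: taking the \emph{first} rather than the last entry into $S$ so that the prefixes are genuine $v$-$S$ paths, and observing that the prefixes avoid $s$ so their pairwise intersections shrink from $\{v,s\}$ to $\{v\}$, which also forces the endpoints $w_1,\dots,w_k$ to be distinct. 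This is essentially the proof given in the cited source, so no gap and no genuinely different route to report.
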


\begin{proposition}
[{\cite[Theorems 3 and 4]{Dirac52}}]\label{prop:dirac}
If~$G$ is a 2-connected graph on~$n$ vertices with minimum degree~$k$, then~$G$ has a longest cycle of length at least~$\min\{2k,n\}$.
\end{proposition}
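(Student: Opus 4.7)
I would prove Proposition~\ref{prop:dirac} by contradiction, using the Fan Lemma to re-route a longest cycle into a longer one. Let $C$ be a longest cycle in $G$ and suppose for contradiction that $|C|<\min\{2k,n\}$. Since $|C|<n$, some vertex $v$ lies outside $V(C)$, and the goal is to use many $v$-$V(C)$ paths to build a cycle strictly longer than $C$.

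\textbf{Preliminary step.} First I would verify that $|V(C)|\geq k$, so that Proposition~\ref{prop:fan-lemma} can be applied to $v$ with $S=V(C)$. A standard longest-path argument handles this: if $P=v_0 v_1\cdots v_\ell$ is a longest path in $G$, maximality forces every neighbor of $v_0$ (and of $v_\ell$) to lie on $P$, so $\ell\geq k$; combined with 2-connectivity, one can close a cycle through $v_0$ and $v_\ell$ containing at least $k+1$ vertices of $P$, and since $C$ is longest, $|V(C)|\geq k+1$.

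\textbf{Main step.} By the Fan Lemma, there exist $k$ internally disjoint $v$-$V(C)$ paths $Q_1,\ldots,Q_k$ meeting $C$ at distinct endpoints $u_1,\ldots,u_k$; by the definition of $v$-$V(C)$ path, no internal vertex of any $Q_i$ lies on $C$ or equals $v$. Number the $u_i$'s in the cyclic order in which they appear along $C$; they split $C$ into $k$ arcs. Suppose some arc between consecutive endpoints $u_i$ and $u_{i+1}$ has length~$1$, i.e.\ $u_i u_{i+1}$ is an edge of $C$. I would then delete this edge from $C$ and replace it by the path $u_i\,Q_i\,v\,Q_{i+1}\,u_{i+1}$, which has length at least $2$ and is internally disjoint from $C$. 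The result is a simple cycle of length at least $|C|+1$, contradicting the maximality of $C$. Hence every one of the $k$ arcs has length at least $2$, which forces $|C|\geq 2k$, contradicting $|C|<2k$.

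\textbf{Main obstacle.} The rerouting is routine once the $k$ paths are in hand; the subtle point is the preliminary step of guaranteeing that a longest cycle has at least $k$ vertices so that the Fan Lemma is applicable. This requires a separate longest-path plus 2-connectivity argument. In the degenerate case $|V(C)|<k$ one can instead exploit the fact that every vertex of $C$ has at least $k$ neighbors, most of which must lie outside $C$, so that a direct pigeonhole argument on common neighbors in $V(G)\setminus V(C)$ again produces a cycle longer than $C$ and closes the contradiction.
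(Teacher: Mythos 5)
The paper offers no proof of Proposition~\ref{prop:dirac}: it is quoted verbatim from Dirac's 1952 paper, so there is nothing internal to compare against and your attempt must stand on its own. It does not. The fatal gap is in your main step: you invoke the Fan Lemma to produce $k$ internally disjoint $v$-$V(C)$ paths, but Proposition~\ref{prop:fan-lemma} requires $G$ to be \emph{$k$-connected}, whereas the hypothesis here is only that $G$ is $2$-connected with minimum degree~$k$. Minimum degree $k$ does not imply $k$-connectivity (take two copies of $K_{k+1}$ glued along an edge), so for a vertex $v$ off $C$ you are guaranteed only \emph{two} internally disjoint $v$-$V(C)$ paths. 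With a fan of size two, $C$ is split into just two arcs, and your ``every arc has length at least $2$'' conclusion yields $|C|\geq 4$, nowhere near $2k$. This is exactly why Dirac's bound is not a short corollary of the fan lemma: the classical proof must work with the attachment vertices of a component $H$ of $G-V(C)$, of which there may be as few as two, and extract the length $2k$ from the degree condition via a longest path inside $H$ (or, in Bondy--Murty's treatment, via the bridge structure of a longest cycle), not from a large fan.

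Your closing paragraph does not repair this. The ``degenerate case $|V(C)|<k$'' is not where the difficulty lies --- your preliminary step is essentially sound, since the largest-index neighbor of an endpoint of a longest path already closes a cycle on at least $k+1$ vertices --- and ``a direct pigeonhole argument on common neighbors in $V(G)\setminus V(C)$'' is an assertion, not an argument: the neighbors of a vertex outside $C$ need not be anywhere near $C$, as they may all lie in the same component of $G-V(C)$. If you want a self-contained proof, take a longest path $P=v_0\cdots v_\ell$, observe that all neighbors of $v_0$ and of $v_\ell$ lie on $P$, and combine $2$-connectivity with a counting argument on the two neighbor index sets to close a cycle of length at least $\min\{2k,\ell+1\}$; the fan-based rerouting you propose only works under the much stronger hypothesis of $k$-connectivity.
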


We are now ready to show the main result of this section.

\begin{theorem}\label{thm:Ck8k-n-16}
For any~$k\geq 2$, Conjecture~$C(k,\min\{k,8k-n-16\})$ is true.
\end{theorem}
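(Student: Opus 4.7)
The plan is to argue by contradiction. Set $r := |V(C) \cap V(D)|$ and suppose $r < \min\{k, 8k - n - 16\}$; in particular, $r < k$. If $n \le 2k$, then by Proposition~\ref{prop:dirac} every longest cycle of $G$ is Hamiltonian, so $V(C) = V(D) = V(G)$ and $r = n \ge k + 1$, contradicting $r < k$. Hence $n > 2k$, and Proposition~\ref{prop:dirac} gives $L := |V(C)| = |V(D)| \ge 2k$. Since $L \ge 2k > r$, both $V(C) \setminus V(D)$ and $V(D) \setminus V(C)$ are nonempty. Writing $s := |V(G) \setminus (V(C) \cup V(D))|$, we have $n = 2L - r + s$, so the inequality $r < 8k - n - 16$ is equivalent to $2L + s < 8k - 16$. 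The heart of the proof is therefore to establish
\[
2L + s \ge 8k - 16
\]
whenever $r < k$.

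To do so, I plan to invoke the Fan Lemma (Proposition~\ref{prop:fan-lemma}) on both $C$ and $D$. Pick $v \in V(D) \setminus V(C)$ and apply the lemma to $v$ with target $V(C)$ (valid since $|V(C)| = L \ge k$), obtaining $k$ internally disjoint $v$-$V(C)$ paths $P_1, \dots, P_k$ chosen to minimize total length. This minimality ensures that the interior of each $P_i$ is disjoint from $V(C)$, so, letting $w_i \in V(C)$ be the endpoint of $P_i$, the closed walk $\alpha \cup P_i \cup P_j$ obtained by adjoining either arc $\alpha$ of $C$ from $w_i$ to $w_j$ with the $w_i$-$w_j$ path formed by $P_i$ and $P_j$ (through $v$) is a simple cycle of $G$. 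Since $C$ is a longest cycle of length $L$,
\[
|P_i| + |P_j| \le L - |\alpha|,
\]
and taking $\alpha$ to be the longer of the two arcs yields the key bound $|P_i| + |P_j| \le d_C(w_i, w_j)$, where $d_C$ denotes the cyclic distance on $C$. Running the symmetric argument starting from some $u \in V(C) \setminus V(D)$ with target $V(D)$ produces $k$ analogous paths $Q_1, \dots, Q_k$ and yields $|Q_i| + |Q_j| \le d_D(u_i, u_j)$ for their endpoints on $D$.

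The main obstacle, I expect, is the careful bookkeeping of three quantities: (i) the distribution of the endpoints $w_i$ across the $r$ arcs of $C$ (at most $r$ of them lie in $V(C) \cap V(D)$, so at least $k - r$ are ``free''); (ii) how many internal vertices of the $P_i$'s (and $Q_i$'s) are absorbed by $V(D) \setminus V(C)$ (resp.\ $V(C) \setminus V(D)$) versus how many fall outside $V(C) \cup V(D)$ and hence contribute to~$s$; and (iii) the pigeonholed trade-off forcing any $P_i$ that lands in a short arc of $C$ to itself be short. Summing the inequalities $|P_i| + |P_{i+1}| \le d_C(w_i, w_{i+1})$ over consecutive endpoints in the cyclic order on $C$, together with the symmetric sum on $D$, should give a lower bound of roughly $2L$, while the vertices of the Fan paths not absorbed by $V(C) \cup V(D)$ swell~$s$. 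I expect the additive constant~$-16$ to emerge from a small number of extremal configurations (arcs of length one, paths $P_i$ of length one, endpoints $w_i$ coinciding with intersection vertices) where several inequalities are simultaneously saturated, degrading a clean ideal bound $2L + s \ge 8k$ into $2L + s \ge 8k - 16$, which gives the desired contradiction.
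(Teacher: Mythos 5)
Your reduction of the target to the inequality $2L+s\ge 8k-16$ (under the contradiction hypothesis $r<k$) is correct algebra, and that inequality is indeed what must be established. The gap is in the mechanism you propose for proving it. A single fan of $k$ internally disjoint $v$-$V(C)$ paths from one vertex $v\in V(D)\setminus V(C)$, combined with the longest-cycle inequality $|P_i|+|P_j|\le d_C(w_i,w_j)$ summed over consecutive endpoints, yields $2\sum_i|P_i|\le L$ and hence only $L\ge 2k$; the symmetric fan onto $D$ gives the same bound for the same quantity $L$, so you obtain $2L\ge 4k$, which falls short of $8k-16$ by essentially a factor of two. The internal vertices of the fan paths cannot make up the difference through $s$: they number at most $\sum_i(|P_i|-1)\le L/2-k$ and may all lie on the other cycle, so they need not contribute to $s$ at all. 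No bookkeeping of the three quantities you list closes this deficit.

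The paper obtains the needed factor $4$ (per cycle) by a different device: it takes a longest path $R$ of length at most $2$ in a component of $G-C$ or $G-D$ and fans from \emph{both} endpoints of $R$, producing about $2(k-|R|)$ fan-endpoints on $C$ whose consecutive arc-gaps have length at least $2$ (and at least $2+|R|$ when an endpoint is ``bicolored''), which gives $L\ge 4(k-2)$ when $|R|=2$ and hence $|X|\ge 2L-n\ge 8k-n-16$. Crucially, when no length-two path exists outside the cycles ($|R|\le 1$), the inequality $L\ge 4k-8$ can genuinely fail ($L$ may be as small as $2k$ with $s=0$), and the paper instead proves the \emph{other} horn of the minimum, $|X|\ge k$, using the structural facts that every component of $D-C$ then has at most two vertices, so $L\le 3|X|$ (or $L\le 2|X|$), together with Proposition~\ref{prop:dirac}. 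Your proposal contains no such case split, and in exactly those configurations the hypothesis $r<k$ must be refuted by an argument of a completely different kind from the fan-summation you describe. As written, the proposal is a reasonable opening reformulation plus an acknowledged hope that the constant $-16$ ``emerges from extremal configurations''; the ideas that actually produce the bound are absent.
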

\begin{proof}
Let~$G$ be a~$k$-connected graph on~$n$ vertices.
Let~$C$ and~$D$ be two longest cycles in~$G$.
Let~$L=|C|$ and~$X=V(C) \cap V(D)$.
Let~$R$ be a longest path in a component of 
$G-C$ or~$G-D$ with length at most two.
Furthermore, if~$R \in G-C$ ($G-D$), then we choose
$R$ as a path that maximizes~$|E(R) \cap E(D)|$
($|E(R) \cap E(C)|$).

Without loss of generality, let
us suppose that~$R \in G-C$, and let~$p$ and~$r$ be the ends of~$R$.
By Proposition \ref{prop:fan-lemma}, as~$|V(C)| \geq k$, there exists a set, of~$k$~$p$-$V(C)$ internally disjoint paths that end at different vertices of~$C$.
As at most~$|R|$ of these paths includes a vertex in~$V(R) \setminus \{p\}$, there is a set, say~$\mathcal{A}$, of at least~$(k-|R|)$~$p$-$V(C)$ internally disjoint paths that end at different vertices of~$C$ and not contain any vertex in~$V(R) \setminus \{p\}$. Analogously, there is a set, say~$\mathcal{B}$, of least~$(k-|R|)$~$r$-$V(C)$ internally disjoint paths that end at different vertices of~$D$ and not contain any vertex of in~$V(R) \setminus \{r\}$.
Let~$F$ be the set of ends of~$\mathcal{A} \cup
\mathcal{B}$ in~$C$.

We construct and auxiliary weighted graph~$C^{*}$ as follows.
The vertex set of~$C^{*}$ is~$F$.
Two vertices~$u$ and~$v$ are adjacent in 
$C^{*}$ if~$C[u,v]$ does not contain any other vertex from~$F$ besides~$u$ and~$v$.
The weight of such an edge, denote it by~$w(uv)$,
is~$|C[u,v]|$. It is clear that~$L=w(C^{*})$.

Given a vertex~$u \in V(C^{*})$, we say that~$u$
is~$A$-\textit{colored} ($B$-\textit{colored}) if~$u$ is an end of a path in~$\mathcal{A}$~($\mathcal{B}$).
If~$u$ is~$A$-colored and~$B$-colored, we say that~$u$ is \textit{bicolored}.
Moreover, we say that an edge~$uv \in E(C^{*})$ is \textit{bicolored} if some of~$u$ or~$v$ is bicolored. Let~$F'$ be the set of bicolored vertices, and let~$C'$ be the set of bicolored edges. 
The following claim is clear from the definition of~$F'$ and~$C'$.

\begin{claim}\label{claim:C'geqF'}
$|C'| \geq |F'|$.
\end{claim}

Let~$F''=F \setminus F'$ and~$C''=E(C^*) \setminus C'$.

\begin{claim}\label{claim:wuvgeq}
Let~$uv \in E(C^{*})$.
If~$uv \in C''$ then~$w(uv) \geq 2$.
If~$uv \in C'$ then~$w(uv) \geq 2+|R|$.
\end{claim}
\begin{proof}
First suppose that~$uv \in C''$. Then,
neither of~$u$ or~$v$ is bicolored.
Hence, without loss of generality, either~$u$ and~$v$ are~$A$-colored or~$u$ is~$A$-colored
and~$v$ is~$B$-colored.
For the first case, let~$P_u$ and~$P_v$ be the corresponding paths in~$\mathcal{A}$.
As~$C-C[u,v]+P_u+P_v$ is also a cycle,
we must have~$|C[u,v]| \geq 2$.
For the second case, let~$P_u$ and~$Q_v$ be the corresponding paths in~$\mathcal{A}$ and~$\mathcal{B}$, respectively.
If~$P_u$ and~$Q_v$ are disjoint, then 
$C-C[u,v]+P_u+R+Q_v$ is also a cycle, so
we must have~$|C[u,v]| \geq 2+|R| \geq 2$.
Otherwise, let~$\tilde{Q}_v$ be the shortest subpath of~$Q_v$ with~$v$ as one of its ends,
and the other end, say~$x$, in~$P_u$.
As~$u$ is not~$B$-colored, then~$x \neq u$.
Let~$\tilde{P}_u$ be the subpath of~$P_u$ with ends~$u$ and~$x$. As~$C-C[u,v]+\tilde{P}_u + \tilde{Q}_v$ is also a cycle, we must have~$|C[u,v]| \geq 2$.

Now suppose that~$uv \in E(C')$, and, without loss of generality, that~$u$ is~$A$-colored and~$v$ is
bicolored.
Let~$P_u,P_v$ and~$Q_v$ be the corresponding paths
in~$\mathcal{A}$,~$\mathcal{A}$ and~$\mathcal{B}$, respectively. If~$P_u$ and~$Q_v$ are disjoint, then 
$C-C[u,v]+P_u+R+Q_v$ is also a cycle, so
we must have~$|C[u,v]| \geq 2+|R|$.
Otherwise, let~$\tilde{Q}_v$ be the shortest subpath of~$Q_v$ with~$r$ as one of its ends,
and the other end, say~$x$ in~$P_u \cup P_v$.
If~$x \in P_u$, let~$\tilde{P}_u$ be the subpath of~$P_u$ with ends~$u$ and~$x$.
As~$C-C[u,v]+\tilde{P}_u + \tilde{Q}_v+R +P_v$ is also a cycle, we must have~$|C[u,v]| \geq 3+|R|$.
Otherwise, let~$\tilde{P}_v$ be the subpath of~$P_v$ with ends~$v$ and~$x$.
As~$C-C[u,v]+\tilde{P}_v + \tilde{Q}_v+R +P_u$ is also a cycle, we must have~$|C[u,v]| \geq 3+|R|$.
\end{proof}

\begin{claim}
$|E(C^*)|=|\mathcal{A}|+|\mathcal{B}|-|F'|$.
\end{claim}
\begin{proof}
Note that~$|E(C^*)|=|F|$.
Let~$F_a$ be the set of~$A$-colored vertices, and let~$F_b$ be the set of~$B$-colored vertices.
Then~$|F|=|F_a| + |F_b| -|F_a \cap F_b|=|\mathcal{A}|+|\mathcal{B}|-|F'|$.
\end{proof}

\begin{claim}\label{claim:Lgeq4(k-2)}
$L \geq 4(k-|R|)+|R||C'|-2|F'|$.
\end{claim}
\begin{proof}
By Claim \ref{claim:wuvgeq},
$w(C')\geq (2+|R|)|C'|$
and~$w(C'') \geq 2|C''|$.
So
\begin{eqnarray*}
L        &=&w(C')+w(C'') \\
         &\geq& (2+|R|)|C'|+2|C''| \\
         &=&2(|C'|+|C''|)+|R||C'|\\
         &=&2|E(C^*)|+|R||C'|\\
         &=&2|\mathcal{A}|+2|\mathcal{B}|-2|F'|+|R||C'|\\
         &\geq& 4(k-|R|)+|R||C'|-2|F'|,
\end{eqnarray*}
as we want.
\end{proof}

By Claim \ref{claim:Lgeq4(k-2)},
if $|R|=2$, then $L \geq 4(k-2)$.
Now, by Proposition \ref{claim:Lgeq4(k-2)},~$|X|\geq 2L-n$. Thus,~$|X| \geq 8k-n-16$, as we want.
Hence, from now on, we may assume that 
$|R|\leq 1$.
This condition will also imply that any component of~$D-C$ has at most two vertices.
Thus, every component of~$D-C$ is either an edge or a vertex, and
\begin{equation}\label{eq:3XgeqL}
L=|V(C) \cap V(D)|+ |V(D) \setminus V(C)| \leq |X|+2|X| = 3|X|.
\end{equation}

Suppose for a moment that every component in~$D-C$ 
is a vertex. By a similar reasoning to the previous paragraph,~$L\leq 2|X|$. 
By Proposition \ref{prop:dirac},~$L \geq \min\{2k,n\}$.
If~$\min\{2k,n\}=n$, then~$L=n$ and~$G$ is hamiltonian, so~$|X|=n=k$ and we are done;
otherwise,~$|X|\geq L/2 \geq k$ and we
are also done. 
Hence, there exists an edge in $D-C$ and, by the choose of~$R$,~$R$ is an edge
in~$D-C$.
From now on,
if $uv \in E(C^{*})$ has a vertex in $X$ we say that $uv$ is \textit{covered}.

\begin{claim}\label{claim:bicolimplcovmoreover}
Let~$u \in V(C^{*})$. 
If~$u$ is bicolored, then it is also covered.
Moreover, if~$v$ is a neighbor of~$u$ in~$C^{*}$,
then~$w(uv) \geq 3$.
\end{claim}
\begin{proof}

Let~$P_u$ and~$Q_u$ be the corresponding paths
in~$\mathcal{A}$, and~$\mathcal{B}$, respectively. 
If~$|P_u|>1$, then there exists a path of length two in~$G-C$, a contradiction to the choise of $R$.
Hence~$|P_u|=1$, and, analogously,~$|Q_u|=1$.
Suppose by contradiction that~$u$ is not covered.
In that case,~$D-R+P_u+Q_u$ is a cycle
longer than~$L$, a contradiction. This proves the first part of the claim.
For the second part,
suppose, without loss of generality, that~$v$ is~$A$-colored,
and let~$P_v \in \mathcal{A}$
the corresponding path.
As before, we can show that~$|P_v|=1$. Hence,
as~$C-C[u,v]+Q_u+R+P_v$ is a cycle,
we must have~$|C[u,v]|\geq 3$.
\end{proof}

Suppose for a moment that~$|F'|=0$.
In that case,~$|C'|=0$, so,
by Claim \ref{claim:Lgeq4(k-2)},
$L \geq 4(k-1)$.
If~$k\leq 3$, then the proof follows by
Theorem \ref{thm:Ckk2a7}. Hence,
we may assume that~$k\geq 4$, which implies
that~$L \geq 3k$, and, as~$3X\geq L$, the proof follows.
Thus, from now on, we may assume that~$|F'|>0$.

\begin{claim}\label{claim:C'>F'}
If $|C'| = |F'|$ then $|X| \geq k$.
\end{claim}
\begin{proof}
Suppose that~$|C'|=|F'|$.
In that case, as~$|F'|>0$, every vertex of~$F$ is bicolored, so~$F=F'$.
As $|R|=1$, we have~$|\mathcal{A}|\in \{k-1,k\}$.
If $|\mathcal{A}|=k$ then, as every vertex of $F$ is bicolored, we have $|F|=|\mathcal{A}|=k$. By Claim \ref{claim:bicolimplcovmoreover}, $|X| \geq |F|=k$ and the proof follows.
Hence, from now on, we may assume that
$|\mathcal{A}|=k-1$.
Then, there exists
a~$p$-$V(C)$ path, say~$\tilde{P}$, that contains~$r$ and
does not end in any vertex of~$F$.
Let~$w$ be the other end of~$\tilde{P}$
and let~$\tilde{P}_{qw}$ be the subpath of~$\tilde{P}$ with ends
$r$ and~$w$.
Let~$u$ and~$v$ be the vertices of~$F$
such that~$r\in C[u,v]$.
Let $P_u$ and $Q_v$ be the corresponding paths
in $\mathcal{A}$ and $\mathcal{B}$, respectively.
Then, as $C-C[u,w]+P_u+R+ \tilde{P}_{qw}$ is
a path, $|C[u,w]|\geq 3$
So~$|C[u,v]| \geq 4$ and there exists a path in~$C-D$ with size at least 2, a contradiction to the choice of $R$.
\end{proof}

By Claim \ref{claim:bicolimplcovmoreover},~$|X| \geq |C'|$ and~$w(C')\geq 3|C'|$. By Claim \ref{claim:wuvgeq},~$w(C'') \geq 2|C''|$.
Also, we may assume that $|F'| \leq |C'|-1$ by Claim  \ref{claim:C'>F'}.
So, as $|R|=1$, by Claim \ref{claim:Lgeq4(k-2)} we have that
\begin{eqnarray*}
L       &\geq& 4(k-1)+|C'|-2|F'| \\
 &\geq& 4k-4+2-|C'| \\
  &\geq& 4k-4+2-|X|.
\end{eqnarray*}
As~$3|X| \geq L$ by \eqref{eq:3XgeqL}, we have
$|X| \geq k-1/2$, which finishes the proof.
\end{proof}

\section{Concluding remarks} \label{section:conclusion}

In this paper, we show that every pair of longest cycles in a~$k$-connected graph intersect each other in at least~$\min\{k,8k-n-16\}$ vertices,
and that every pair of longest paths intersect each other in at least~$\min\{k,8k-n-9\}$ vertices. A direct corollary of these results is that,
if~$k \geq (n+16)/7$, then every pair of longest cycles intersect in at least~$k$ vertices and, if~$k \geq (n+9)/7$, then every pair of longest paths intersect in at least~$k$ vertices.
In \cite{Gutierrez2021-Pro}, a set of conjectures, called CHC($r$), affirm that
if~$k \geq n/r$, then every pair of longest paths intersect in at least~$k$ vertices.
In this paper, we progress towards the case~$r=7$. We believe the techniques presented here can be used for proving the cases when~$r>7$.

\bibliographystyle{amsplain}
\bibliography{bibliografia}

\providecommand{\bysame}{\leavevmode\hbox to3em{\hrulefill}\thinspace}
\providecommand{\MR}{\relax\ifhmode\unskip\space\fi MR }
\providecommand{\MRhref}[2]{%
  \href{http://www.ams.org/mathscinet-getitem?mr=#1}{#2}
}
\providecommand{\href}[2]{#2}
\begin{thebibliography}{10}

\bibitem{BondyM08}
J.~A. Bondy and U.~S.~R. Murty, \emph{Graph theory}, Graduate Texts in
  Mathematics, vol. 244, Springer, New York, 2008.

\bibitem{Chen98}
G.~Chen, R.~J. Faudree, and R.~J. Gould, \emph{Intersections of longest cycles
  in $k$-connected graphs}, Journal of Combinatorial Theory, Series B
  \textbf{72} (1998), no.~1, 143--149.

\bibitem{Cho2022}
Eun-Kyung Cho, Ilkyoo Choi, and Boram Park, \emph{Improvements on hippchen's
  conjecture}, Discrete Mathematics \textbf{345} (2022), no.~11, 113029.

\bibitem{Dirac52}
G.~A. Dirac, \emph{Some theorems on abstract graphs}, Proceedings of the London
  Mathematical Society \textbf{s3-2} (1952), no.~1, 69--81.

\bibitem{Grotschel84}
M.~Gr\"otschel, \emph{On intersections of longest cycles}, Graph Theory and
  Combinatorics (1984), 171--189.

\bibitem{Gutierrez2021-Pro}
Juan Gutierrez, \emph{On the intersection of two longest paths in k-connected
  graphs}, Pro Mathematica \textbf{XXXI, 62} (2021), 11--23.

\bibitem{Gutierrez2021}
Juan Gutiérrez, \emph{Transversals of longest cycles in partial k-trees and
  chordal graphs}, Journal of Graph Theory \textbf{98} (2021), no.~4, 589--603.

\bibitem{Gutierrez2021-bitraceable}
Juan Gutiérrez and Christian Valqui, \emph{Bi-traceable graphs, the
  intersection of three longest paths and hippchen's conjecture}, 2021.

\bibitem{Hippchen08}
T.~Hippchen, \emph{Intersections of longest paths and cycles}, Master's thesis,
  Georgia State University, 2008.

\bibitem{Steward1995}
Iain~A. Stewart, \emph{On the intersections of longest cycles in a graph},
  Experiment. Math. \textbf{4} (1995), no.~1, 41--48.

\end{thebibliography}

\end{document}